%%%%%%%%%%%%%%%%%%%%%%%%%%%%%%%%%%%%%%%%%%%%%%%%%%%%%%%%%%%%%%%%%%%%%%%%%%%%%%%%
%2345678901234567890123456789012345678901234567890123456789012345678901234567890
%        1         2         3         4         5         6         7         8

\documentclass[letterpaper, 10 pt, conference]{ieeeconf}  % Comment this line out if you need a4paper
\IEEEoverridecommandlockouts                              % This command is only needed if 
                                                          % you want to use the \thanks command

\overrideIEEEmargins                                      % Needed to meet printer requirements.

% See the \addtolength command later in the file to balance the column lengths
% on the last page of the document

% The following packages can be found on http:\\www.ctan.org
%\usepackage{graphics} % for pdf, bitmapped graphics files
%\usepackage{epsfig} % for postscript graphics files
%\usepackage{mathptmx} % assumes new font selection scheme installed
%\usepackage{times} % assumes new font selection scheme installed
\usepackage{amsmath} % assumes amsmath package installed
\usepackage{amssymb}  % assumes amsmath package installed

\hyphenation{op-tical net-works semi-conduc-tor}
\usepackage{tabulary}
\usepackage[english]{babel}
\usepackage{blindtext}
\usepackage[makeroom]{cancel}
\usepackage{amsfonts}
\usepackage{subfig,pgfplots}
\usepackage{amsthm}  %This is for proofs
\usepackage{amssymb}
\usepackage{graphicx}

% declare the path(s) where your graphic files are
% and their extensions so you won't have to specify these with
% every instance of \includegraphics
%\usepackage{mcode}
%\usepackage[framed,numbered,autolinebreaks]{mcode}
%\lstset{breakatwhitespace=false}
% correct bad hyphenation here
\usepackage{pgf,tikz}
\pgfplotsset{ % Here we specify options for all figures in the document
  compat=newest, % Which version of pgfplots do we want to use?
   legend style =
  {font=\footnotesize \sffamily},
  label style = {font=\small\sffamily},
every tick label/.append style={font=\small}
  }
\usepackage{tabularx}
\usepackage{float}
\usepackage{cite}
\pagestyle{empty}
\usepgfplotslibrary{fillbetween}
\usetikzlibrary{shapes.multipart}
\usepackage{enumitem} % To reference numbered items
\usepackage{bbm} % This is for \mathbb for lower case letters

% For page margins in CDC
% \usepackage[margin=0.75in]{geometry}
\usepackage[strict]{changepage}

 % thin space, limits underneath in displays
\renewcommand{\eqref}[1]{Eq.~(\ref{#1})}  %Modified equation reference

\newtheorem{theorem}{Theorem}
\newtheorem{lemma}{Lemma}
\newtheorem{assumption}{Assumption}

\newtheorem{proposition}{Proposition}

\newtheorem{example}{Example}

\newcommand\scalemath[2]{\scalebox{#1}{\mbox{\ensuremath{\displaystyle #2}}}}

\def\h{2.5}
\def\l{3.45}

\setlength{\belowcaptionskip}{-12pt}

\title{\LARGE \bf
A mean-field analysis of a network behavioural--epidemic model 
}

\author{Kathinka Frieswijk, Lorenzo Zino, Mengbin Ye, Alessandro Rizzo, and Ming Cao
% <-this % stops a space
\thanks{K. Frieswijk, L. Zino, and M. Cao are with the Faculty of Science and Engineering, University of Groningen, Groningen, the Netherlands (\texttt{\{k.frieswijk,lorenzo.zino,m.cao\}@rug.nl}). 
M. Ye is with the Centre for Optimisation and Decision Science, Curtin University, Perth, Australia (\texttt{mengbin.ye@curtin.edu.au}).
A. Rizzo is with the Department of Electronics and Telecommunications, Politecnico di Torino, Torino, Italy, and with the Institute for Invention, Innovation, and Entrepreneurship, New York University Tandon School of Engineering, Brooklyn NY, USA (\texttt{alessandro.rizzo@polito.it}). This work was partially supported by the European Research Council (ERC-CoG-771687), and the Western Australian Government (Premier's Science Fellowship Program).}%
}

\begin{document}

\maketitle
\thispagestyle{empty}

%%%%%%%%%%%%%%%%%%%%%%%%%%%%%%%%%%%%%%%%%%%%%%%%%%%%%%%%%%%%%%%%%%%%%%%%%%%%%%%%
\begin{abstract}
The spread of an epidemic disease and the population’s collective behavioural response are deeply intertwined, influencing each other's evolution. Such a co-evolution typically has been overlooked in mathematical models, limiting their real-world applicability. To address this gap, we propose and analyse a behavioural--epidemic model, in which a susceptible--infected--susceptible epidemic model and an evolutionary game-theoretic decision-making mechanism concerning the use of self-protective measures are coupled. Through a mean-field approach, we characterise the asymptotic behaviour of the system, deriving conditions for global convergence to a disease-free equilibrium and characterising the endemic equilibria of the system and their (local) stability. Interestingly, for a certain range of the model parameters, we prove global convergence to a limit cycle, characterised by periodic epidemic outbreaks.\end{abstract}

%  Numerical simulations are provided to confirm and extend our theoretical findings. 
%%%%%%%%%%%%%%%%%%%%%%%%%%%%%%%%%%%%%%%%%%%%%%%%%%%%%%%%%%%%%%%%%%%%%%%%%%%%%%%%
\section{Introduction}\label{sec:intro}
Mathematical models of epidemic spreading on networks have been of increasing interest to the systems and control community~\cite{Nowzari2016,Mei2017,Pare2020,zinoreview}. Since 2020, the COVID-19 pandemic has given an extra impetus to such an interest~\cite{giordano2020modelling,DellaRossa2020}. In particular, the ongoing pandemic has highlighted the key role of human behavioural response in shaping the course of an epidemic outbreak and how such a response is deeply intertwined with the epidemic spreading process. Some efforts have been made to incorporate human behaviour into epidemic models~\cite{Funk2010,Wang2015,Sahneh2012,Granell2013,9089218,frieswijk2021time}, in particular, by adding an \textit{alert} state, in which individuals take self-protective measures based on factors such as the awareness of the infection prevalence \cite{Sahneh2012,frieswijk2021time}, communication with neighbours \cite{Granell2013},  awareness campaigns \cite{9089218,frieswijk2021time}, or by incorporating opinion dynamics mechanisms~\cite{Peng2021,She2022}. While these models proved useful in capturing some key aspects of real-world epidemics, their inherent oversimplification of the evolving nature of human behaviour limits their practical applicability.

Recently, evolutionary game theory has emerged as a powerful framework to develop realistic behavioural--epidemic models~\cite{Huang2022,Hota2019,Khazaei2021,Elokda2021,Martins2022,Satapathi2022}. In~\cite{ye2021game}, a novel game-theoretic paradigm was proposed, in which human decision making and epidemics co-evolve on a two-layered network, with the decision making influenced by a range of factors such as social influence, interventions, risk perception, and immediate and accumulated costs of using protection. However, except for the approximation of the epidemic threshold,~\cite{ye2021game} relies only on numerical simulations, which suggest that the behavioural--epidemic model can reproduce a wide range of behaviours, including eradication of the disease, convergence to endemic equilibria, or periodic oscillations and multiple epidemic waves.

In this letter, we expand on~\cite{ye2021game} to provide an analytical treatment of the long-term behaviour of a game-theoretical behavioural--epidemic model. To this aim, we propose a continuous-time implementation of the framework proposed in~\cite{ye2021game}, combined with a susceptible--infected--susceptible epidemic model. Through a mean-field approach~\cite{VanMieghem2009}, we derive analytical results on the asymptotic behaviour of the system. After having established the epidemic threshold, we analyse the behaviour of the system below and above such a threshold. Below the threshold, we prove global convergence to a disease-free equilibrium (DFE). Above the threshold, we characterise the endemic equilibria (EEs) of the system and their local stability properties. Furthermore, we derive conditions under which the system undergoes periodic oscillations with multiple waves, converging to a limit cycle. Finally, numerical simulations suggest that the locally exponentially stable equilibria are also globally stable, paving the way for future research towards extending our theoretical findings.

\section{Model}\label{sectionmodel}

%We consider a continuous-time implementation of the behavioural--epidemic framework proposed in~\cite{ye2021game}, defined in the context of a network SIS epidemic model. %In Section \ref{network}, we introduce the population and network model. Next, in Section \ref{sectionepidemicmodel}, we illustrate the behavioural--epidemic model.

\emph{Notation:} The set of real, real nonnegative, and strictly positive real numbers is denoted by $\mathbb{R}$, $\mathbb{R}_{\ge 0}$, and $\mathbb{R}_{> 0}$, respectively. 
%Given a function $x(t)$ with $t \in \mathbb{R}_{\ge 0}$, we define $x(t^+) = \lim_{s \searrow t} x(s)$, and $x(t^-) = \lim_{s \nearrow t} x(s)$. 
We say that an event $E$ is triggered by a \emph{Poisson clock} with (possibly time-varying) rate $q_E(t)$, if $\lim_{\Delta t \searrow 0} {\mathbb{P}\big[E\text{ occurs during }(t,t+\Delta t)\big]}/{\Delta t} =q_E (t)$.

\subsection{Population and Network Model}\label{network}

We consider a population of $n$ individuals $\mathcal{V}=\{1,\hdots,n\}$. Each individual $i \in \mathcal{V}$ is characterised by a two-dimensional state $(x_i(t), y_i(t))$, reflecting their \emph{behavioural state} $x_i(t) \in \{0,1\}$ and \emph{health state} $y_i(t) \in \{S,I \}$, at time $t \in \mathbb{R}_{\ge 0}$. In particular, an individual $i \in \mathcal{V}$ either chooses to use self-protective measures ($x_i(t) =1$) at time $t$, thereby preventing any possible contraction of the disease, or to not employ them ($x_i(t) =0$); simultaneously, the individual can have two different health states: $y_i(t) = I$ if $i$ is infected, and $y_i(t) = S$ if $i$ is healthy and susceptible to the infection. 

Each individual is represented by a node in a two-layer temporal network $\mathcal{G}(t) = (\mathcal{V}, \mathcal{E}_{\text{I}}, \mathcal{E}_{\text{C}}(t))$, illustrated in Fig.~\ref{fig:network}. The \em influence layer \em captures social influence on the individual's decision-making process through the (possibly directed) link set $\mathcal{E}_{\text{I}}$, whereby node $j$ is an (out)-neighbour of $i$ ($(i,j)  \in \mathcal{E}_{\text{I}}$) if and only if (iff)  $j$ can influence $i$'s behaviour. The set of neighbours of $i$ is denoted by $\mathcal N_i : =  \{j \in \mathcal{V} \ :  \ (i,j) \in \mathcal{E}_{\text{I}}  \}$, with size $d_i:=|\mathcal N_i|$. Since the spreading of a disease typically evolves much faster than social ties do, we assume that the influence layer is time-invariant. 

Disease transmission from an infectious to a susceptible individual occurs through interactions in close physical proximity, henceforth denoted by \emph{contacts}, modelled by the \textit{contact layer} $\mathcal{E}_{\text{C}}(t)$, where  $\{i,j\}  \in \mathcal{E}_{\text{C}}(t)$ iff $i$ and $j$ have a contact at time $t \in \mathbb{R}_{\ge0}$. We assume that contacts are generated according to a continuous-time activity-driven network~\cite{Zino2016}, in which each individual $i \in \mathcal{V}$ is assigned an \emph{activity rate} $a_i \in \mathbb{R}_{>0}$, which captures the level of physical activity of individual $i$. Then, $i$ activates if triggered by a Poisson clock with rate $a_i$ and, once active, generates a contact with another individual, selected uniformly at random from $\mathcal{V}\setminus\{i\}$. 

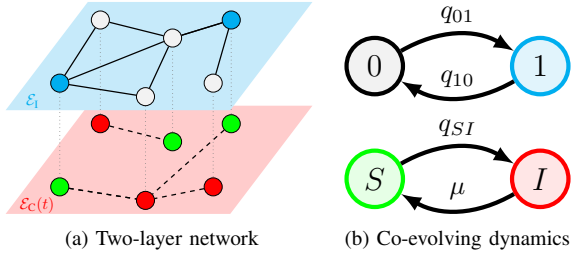
\begin{figure}
    \centering
    \subfloat[Two-layer network]{\scalebox{0.6}{\begin {tikzpicture}

\usetikzlibrary{shapes.geometric}

\tikzstyle{trap}=[trapezium, trapezium stretches=true, draw=none, minimum width=6.8cm,  minimum height=2.4cm, trapezium left angle=60, trapezium right angle=120]
\tikzstyle{peers}=[draw,circle, text=black,  fill=white,inner sep=0pt, minimum size=.4cm]
\node[trap,fill=red!20]
    at (1.3,2.1) {};
\node[trap,fill=cyan!20]
    at (1.3,4.4) {};

  \node at (-1.5,3.4) {\color{cyan}$\mathcal E_{\text{I}}$};   
 \node at (-1.4,1.1) {\color{red}$\mathcal E_{\text{C}}(t)$};

\node[peers,fill=cyan] (1) at (-0.9,3.8) {};
\node[peers,fill=gray!10] (2) at (1,3.5) {};
\node[peers,fill=gray!10] (3) at (2.5,3.8) {};

\node[peers,fill=gray!10] (4) at (0,5.2) {};
\node[peers,fill=gray!10] (5) at (1.6,4.8) {};
\node[peers,fill=cyan] (6) at (2.9,5.2) {};

\node[peers,fill=green] (11) at (-0.9,1.5) {};
\node[peers,fill=red] (12) at (1,1.2) {};
\node[peers,fill=red] (13) at (2.5,1.5) {};

\node[peers,fill=red] (14) at (0,2.9) {};
\node[peers,fill=green] (15) at (1.6,2.5) {};
\node[peers,fill=green] (16) at (2.9,2.9) {};

\foreach \i/\j in {1/2,1/4,1/5,4/5,6/3,6/5,2/5,5/6}
{\draw[thick] (\i) edge  (\j) ;}

\foreach \i/\j in {11/12,11/12,16/12,13/12,14/15}
{\draw[dashed,thick] (\i) edge (\j) ;}

\foreach \i/\j in {11/1,2/12,3/13,4/14,5/15,6/16}
{\draw[-,dotted, >=latex,gray] (\i) edge (\j);}

\end{tikzpicture}}\label{fig:network}}\quad\subfloat[Co-evolving dynamics]{\begin{tikzpicture} \node[draw=red, fill=red!10,circle, ultra thick,,minimum size=0.75cm] (I1) at (3.5,1.5) {\large $I$};
\node[draw=green, fill=green!10,circle, ultra thick,,minimum size=0.75cm]  (S1) at (1.3,1.50) {\large $S$};
\path [->,>=latex,ultra thick]  (S1) edge[bend left =30]   node [above] {{$q_{SI}$}} (I1);
\path [->,>=latex,ultra thick]  (I1) edge[bend left =30]   node [above] {{$\mu$}} (S1);

\node[draw=black, fill=gray!10,circle, ultra thick,,minimum size=0.75cm] (0) at (1.3,3) {\large $0$};
\node[draw=cyan, fill=cyan!10,circle, ultra thick,,minimum size=0.75cm]  (1) at (3.5,3) {\large $1$};
\path [->,>=latex,ultra thick]  (0) edge[bend left =30]   node [above] {{$q_{01}$}} (1);
\path [->,>=latex,ultra thick]  (1) edge[bend left =30]   node [above] {{$q_{10}$}} (0);
\end{tikzpicture}\label{fig:dynamics}}
    \caption{Illustration of the network model and dynamics. }
    \label{fig:schematic} 
\end{figure}

\subsection{Behavioural--Epidemic Model}\label{sectionepidemicmodel}

In the behavioural--epidemic framework proposed in~\cite{ye2021game}, each individual $i\in \mathcal{V}$ decides whether to adopt self-protective measures according to an evolutionary game-theoretic mechanism~\cite{hofbauer1998evolutionary}, depending on social influence, risk perception, costs for adopting self-protective measures, frustration, and government policy interventions. Here, we propose a simplified decision-making mechanism in which the last two factors are omitted. Such a simplification allows the reduction of the number of parameters involved in the system, simplifying its analysis and the presentation of the results, without restricting the broad range of possible emergent behaviours, as we shall demonstrate in this letter.

To capture these factors, we introduce the payoff function
\begin{subequations}\label{payoff}
 \begin{align}
       \pi_1^{(i)} (t) &=\displaystyle\frac{1}{d_i} \sum\nolimits_{j\in\mathcal N_i} x_j(t)  + \zeta \bar{y}(t)\,,\label{payoff1}
       \intertext{which captures the payoff for adopting self-protective measures ($x_i=1$),  where $\bar{y}(t) :=  \tfrac{1}{n}\big| \{i \in \mathcal{V} \ :  \ y_i(t) =  I \} \big|$ denotes the infection prevalence at time $t$; and} 
    \pi_0^{(i)} (t)&=\displaystyle\frac{1}{d_i} \sum\nolimits_{j\in\mathcal N_i} \big(1-x_j(t)\big)  +c\,,\label{payoff0}
  \end{align}
\end{subequations}
which captures the payoff associated with not adopting self-protections. The first term, present in both formulae, represents \textit{social influence}: the more neighbours of $i$ adopt a certain action, the higher the payoff for the corresponding action. The term $\zeta \bar y(t)$, with $\zeta\in\mathbb R_{\geq 0}$, increases the payoff for adopting self-protections as the infection prevalence grows, capturing the \textit{risk perception}. Here, we assume that people react in a linear fashion in response to the information they receive on the infection prevalence $\zeta \bar y(t)$, but more complex and nonlinear terms may be considered. Finally, the constant $c\in\mathbb R_{\geq 0}$ represents the psychological, social, and economical \textit{cost} per unit-time associated with the adoption of self-protections, thereby increasing the payoff for not adopting self-protections.

Individuals change their behaviour following a stochastic implementation of the classical \textit{imitation dynamics} mechanism, which is often used in evolutionary game theory~\cite{hofbauer1998evolutionary,Como2021}, in which they imitate their peers triggered by Poisson clocks with rate equal to their corresponding payoff functions. Specifically, an individual $i$ who is not adopting self-protective measures at time $t$ (i.e.\ $x_i(t)=0$) will adopt them if triggered by a Poisson clock with rate \begin{subequations}\label{behaviourevolution}
 \begin{align}
      q_{01}^{(i)}(t)&=\frac{1}{d_i}\sum\nolimits_{j\in \mathcal N_i}x_j(t)\pi_1^{(j)}(t)\,,\label{behaviourevolution_01}
 \intertext{and an individual $i$ who is adopting them (i.e. $x_i(t)=1$) will stop if triggered by a Poisson clock with rate}  q_{10}^{(i)}(t)&=\frac{1}{d_i}\sum\nolimits_{j\in \mathcal N_i}\big(1-x_j(t)\big)\pi_0^{(j)}(t)\,.\label{behaviourevolution_10}
  \end{align}
\end{subequations}

Simultaneously, if a susceptible individual $i$ ($y_i(t) = S$) who does not use protective measures ($x_i(t) =0$) has a physical encounter with an infected individual $k$ ($y_k(t) =I$), then $i$ becomes infected with \textit{per-contact infection probability} $\lambda\in (0,1]$. We assume that self-protective measures are $100\%$ effective in preventing contagion. Hence, if individual $i$ employs protections at time $t$ ($x_i(t) =1$), then they cannot be infected at time $t$. Following~\cite{Zino2017}, we compute that if $i$ is susceptible at time $t$ ($y_i(t) = S$), then $i$ will become infected if triggered by a Poisson clock with rate 
\begin{equation}\label{infectionrate}
         q_{SI}^{(i)}(t)= \dfrac{\lambda(1-x_i(t))}{n-1}\bigg(na_i\bar y(t) +  \sum_{j\in \mathcal{V}:  y_j(t) = I} a_j\bigg)\,,
\end{equation}
where the first term in the parentheses accounts for the contact initiated by $i$ with infected individuals, and the second accounts for contacts initiated by infected individuals who interact with $i$. If the disease can be transmitted only in one direction, then only the corresponding term should be considered in \eqref{infectionrate}. Note that if individual $i$ employs protection at time $t$, then $q_{SI}^{(i)}(t)=0$. An infected individual $i$ $\left(y_i(t) = I\right)$ spontaneously recovers, if triggered by a Poisson clock with node-independent and time-invariant rate $\mu\in \mathbb{R}_{>0}$. All the state transitions and rates are shown in Fig.~\ref{fig:dynamics}.

\section{Mean-Field Dynamics}\label{sectionmf}
The evolution of the state of each individual $(x_i(t),y_i(t))$, $i\in\mathcal V$, is determined by independent Poisson clocks. Hence, the state of the system follows a Markov process on a state space with size growing exponentially with the population size $n$, making its direct analysis unfeasible. We employ a mean-field relaxation of the stochastic process to derive analytical insight, following the $n$-intertwined mean-field approach described in~\cite{VanMieghem2009}. Specifically, we define and study for each individual $i \in \mathcal{V}$ the probabilities of adopting protective behaviours $p_x^{(i)}(t)  : = \mathbb{P}\left[ x_i(t) = 1 \right]$ and of being infected  $p_y^{(i)}(t) : = \mathbb{P}\left[ y_i(t) = I\right]$, which evolve according to
\begin{subequations}\label{meanfield_evo}\begin{align}
      \dot p_x^{(i)}  &=(1-p_x^{(i)})q_{01}^{(i)}-p_x^{(i)}q_{10}^{(i)}\,,\label{meanfield_evox}\\ 
      \dot p_y^{(i)}&=(1-p_y^{(i)})q_{SI}^{(i)}-p_y^{(i)}\mu\label{meanfield_evoy}\,.
      \end{align}
\end{subequations}

Also, we introduce the macroscopic variables
\begin{equation}\label{macro}
  x(t):= \dfrac{1}{n}\sum\nolimits_{i \in \mathcal{V}} p_x^{(i)}(t)\,,\quad  y(t) : = \dfrac{1}{n}\sum\nolimits_{i \in \mathcal{V}} p_y^{(i)}(t)\,,
\end{equation}
which are the average probability that a randomly selected individual is adopting protections and is infected at time $t$, respectively. Let $\bar x(t):=\frac1n\sum_{i\in\mathcal V} x_i(t)$ denote the fraction of adopters of self-protection in the population at time $t$.
In the limit of large-scale populations, $n \to \infty$, the central limit theorem ensures that $\bar x(t)$ and $\bar y(t)$ converge to $x(t)$ and $y(t)$, respectively. Hence, the macroscopic variables in \eqref{macro} approximate with arbitrary accuracy the fraction of adopters of self-protective measures and the epidemic prevalence, for any finite-time horizon~\cite{limitmarkov}.

In the rest of this letter, we will make the following simplifying assumption.  

\begin{assumption}\label{assumptionsimple}
We assume that a) the influence layer is complete, i.e.\ $\mathcal N_i=\mathcal V,\,\forall\,i\in\mathcal V$;
b) individuals have homogeneous activity, i.e.\ $a_i = \alpha \in \mathbb{R}_{>0}$, $\forall\,i \in \mathcal{V}$; and c) individuals have the same initial probability of adopting protections, i.e.\ $p_x^{(i)}(0) = p_x(0) \in [0,1], \forall\,i \in \mathcal{V}$.
\end{assumption}
Under item a) of Assumption \ref{assumptionsimple}, \eqref{payoff}  reduces to $\pi_{1} (t) = x(t)  + \zeta  y (t)$ and $\pi_{0} (t)=1- x(t)+c$, where we have dropped the index $i$ since the payoffs are uniform across the population. Under Assumption~\ref{assumptionsimple}, we  derive a planar system that governs the mean-field evolution of the macroscopic variables (proof in Appendix~\ref{app:proofsystem}) and rigorously analyse it.

\begin{proposition}\label{prop:system}In the limit of large-scale populations $n\to\infty$ and under Assumption~\ref{assumptionsimple}, the two macroscopic quantities in \eqref{macro} evolve according to the following planar system:
\begin{equation}\label{simplesystem}
   \begin{array}{lll}
       \dot{x} & =& x(1-x) (2x+\zeta y-1 -c)\,,\\
    \dot{y} & =& 2\alpha \lambda y(1-x)(1-y) - \mu y\,.
\end{array} 
\end{equation}
\end{proposition}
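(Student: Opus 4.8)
The plan is to start from the mean-field relations in \eqref{meanfield_evo}, to substitute the behavioural transition rates $q_{01}^{(i)},q_{10}^{(i)}$ and the infection rate $q_{SI}^{(i)}$ after simplifying them under Assumption~\ref{assumptionsimple}, and then to average over the population. First I would treat the behavioural rates: since by item a) the influence layer is complete, within the mean-field closure each neighbour average $\frac1{d_i}\sum_{j\in\mathcal N_i}x_j(t)$ is replaced by $\frac1n\sum_{j\in\mathcal V}p_x^{(j)}(t)=x(t)$, so that $\pi_1^{(j)}(t)=x(t)+\zeta y(t)=:\pi_1(t)$ and $\pi_0^{(j)}(t)=1-x(t)+c=:\pi_0(t)$ uniformly in $j$; consequently $q_{01}^{(i)}(t)=\frac1n\sum_{j}p_x^{(j)}(t)\,\pi_1(t)=x(t)\pi_1(t)$ and $q_{10}^{(i)}(t)=(1-x(t))\pi_0(t)$, both independent of $i$.

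Next I would treat the infection rate: by item b), $\sum_{j:\,y_j(t)=I}a_j=\alpha\,|\{j:y_j(t)=I\}|=\alpha n\bar y(t)$ and $n a_i\bar y(t)=\alpha n\bar y(t)$, so the rate in \eqref{infectionrate} reduces to $q_{SI}^{(i)}(t)=\tfrac{2\alpha\lambda n}{n-1}(1-x_i(t))\bar y(t)$. In the limit $n\to\infty$, using $\tfrac{n}{n-1}\to1$, $\bar y(t)\to y(t)$, and the mean-field factorisation of the product $(1-x_i)\bar y$ (the self-term contributing only $O(1/n)$), the gain term in \eqref{meanfield_evoy} becomes $2\alpha\lambda(1-p_y^{(i)})(1-p_x^{(i)})y$. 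Collecting everything, for every $i\in\mathcal V$ one obtains
\begin{equation*}
\dot p_x^{(i)}=(1-p_x^{(i)})\,x\pi_1-p_x^{(i)}\,(1-x)\pi_0,\qquad
\dot p_y^{(i)}=2\alpha\lambda\,(1-p_y^{(i)})(1-p_x^{(i)})\,y-\mu\,p_y^{(i)}.
\end{equation*}

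The key step is to exploit item c). Subtracting the $p_x^{(k)}$-equation from the $p_x^{(i)}$-equation, the quantity $w:=p_x^{(i)}-p_x^{(k)}$ obeys the linear homogeneous scalar ODE $\dot w=-\big(x\pi_1+(1-x)\pi_0\big)w$, with $w(0)=0$ by item c); hence $p_x^{(i)}(t)=p_x^{(k)}(t)$ for all $i,k$ and all $t\ge0$, and since these common values average to $x(t)$ we get $p_x^{(i)}(t)\equiv x(t)$. Substituting $p_x^{(i)}\equiv x$ into the first equation gives $\dot x=x(1-x)(\pi_1-\pi_0)=x(1-x)(2x+\zeta y-1-c)$, while averaging the second equation over $i$ and using $\frac1n\sum_{i}(1-p_y^{(i)})(1-p_x^{(i)})=(1-x)(1-y)$ gives $\dot y=2\alpha\lambda(1-x)y(1-y)-\mu y$, which is exactly the claimed planar system.

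I expect the main difficulty to lie not in this algebra but in making the mean-field closure precise: factorising correlated products such as $x_j\pi_1^{(j)}$ and $(1-x_i)\bar y$, and replacing the empirical fractions $\bar x(t),\bar y(t)$ by the deterministic $x(t),y(t)$ as $n\to\infty$. I would ground this in the $n$-intertwined mean-field framework of~\cite{VanMieghem2009} and the weak-convergence (propagation-of-chaos) result of~\cite{limitmarkov} already invoked in the text. I would also emphasise that item c) is indispensable: without the identity $p_x^{(i)}\equiv x$, the term $\frac1n\sum_i p_x^{(i)}p_y^{(i)}$ implicit in $\dot y$ would fail to collapse to $xy$, so that the reduction to a \emph{closed} planar system --- as opposed to an $n$-dimensional one --- would not go through.
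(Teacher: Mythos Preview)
Your proposal is correct and follows essentially the same route as the paper's own proof: simplify the transition rates under Assumption~\ref{assumptionsimple}, use item~c) to conclude $p_x^{(i)}(t)\equiv x(t)$ for every $i$, and then average the $p_y^{(i)}$-equation to close the system. Your explicit difference-ODE argument showing that identical initial conditions for $p_x^{(i)}$ propagate in time is a welcome elaboration of a step the paper leaves implicit.
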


The following result guarantees that \eqref{simplesystem} is always well-defined, i.e.\ that the variables $x$ and $y$, which represent fractions of the population, remain within $[0,1]\times[0,1]$.
\begin{lemma}\label{invariantdomain}
The domain $[0,1]\times[0,1]$ is positively invariant for \eqref{simplesystem}.
\end{lemma}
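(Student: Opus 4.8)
The plan is to show that the vector field of \eqref{simplesystem} never points strictly out of the unit square along its boundary; since $\partial([0,1]^2)$ is a union of coordinate-aligned segments, this is enough to conclude positive invariance. First I would note that the right-hand side of \eqref{simplesystem} is polynomial, hence locally Lipschitz, so through every point there passes a unique maximal solution; this uniqueness is what makes the boundary argument work.

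Next I would inspect the four edges of $[0,1]^2$ separately. On $\{x=0\}$ the first equation gives $\dot x=0$, and on $\{x=1\}$ the factor $x(1-x)$ again forces $\dot x=0$; similarly on $\{y=0\}$ the second equation gives $\dot y=0$. Hence the constant functions $x\equiv 0$, $x\equiv 1$, and $y\equiv 0$ are themselves solutions, so each of the three segments $\{x=0\}$, $\{x=1\}$, $\{y=0\}$ (intersected with the square) is an invariant set, and by uniqueness no trajectory starting in $[0,1]^2$ can ever cross one of them. The only remaining edge is the top one, $\{y=1\}$, on which the second equation evaluates to $\dot y = 2\alpha\lambda(1-x)(1-1)-\mu = -\mu<0$, so the field is strictly inward-pointing there. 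To turn this into an escape-exclusion statement one argues by contradiction: if a solution with $y(0)\le 1$ satisfied $y(t_1)>1$ for some $t_1>0$, then at the first time $t^\star$ with $y(t^\star)=1$ we would have $\dot y(t^\star)=-\mu<0$, contradicting $y(t)>1$ for $t$ slightly larger than $t^\star$. (Equivalently, one may invoke Nagumo's theorem and check the subtangentiality condition on each face.)

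Combining the two observations, any solution issuing from $[0,1]^2$ stays in $[0,1]^2$ on its whole interval of existence, and being confined to a compact set it is in fact defined for all $t\ge 0$. I do not expect a genuine obstacle here; the only subtlety worth spelling out is that the three ``degenerate'' edges $\{x=0\},\{x=1\},\{y=0\}$ are handled via invariance and uniqueness of solutions, whereas $\{y=1\}$ is the single edge on which the field is genuinely transverse and must be dealt with by the strict inequality $\dot y=-\mu<0$.
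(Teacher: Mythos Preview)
Your proposal is correct and follows essentially the same approach as the paper: the paper invokes Nagumo's theorem after noting Lipschitz continuity, and then checks precisely the same boundary conditions you do ($\dot x=0$ on $\{x=0\}$ and $\{x=1\}$, $\dot y=0$ on $\{y=0\}$, and $\dot y<0$ on $\{y=1\}$). Your write-up is somewhat more explicit in spelling out the uniqueness-of-solutions argument for the degenerate edges rather than appealing directly to Nagumo, but the underlying idea and computations are identical.
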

\begin{proof}
The domain $[0,1]\times[0,1]$ is compact and convex and the vector field in \eqref{simplesystem} is Lipschitz-continuous. Hence, Nagumo's Theorem can be applied (see~\cite{Blanchini1999}). We are left with checking the direction of the vector field at the boundaries of the domain. We observe that $\dot x=0$ for $x=0$ and $x=1$, while $\dot y=0$ for $y=0$ and $\dot y<0$ for $y=1$, implying that any trajectory such that $(x(0),y(0))\in[0,1]\times[0,1]$ has $(x(t),y(t))\in[0,1]\times[0,1]$ for any $t\geq 0$.  
\end{proof}

In the following, we will make some realistic assumptions on the model parameters. In particular, we want to guarantee that the use of self-protective measures is always preferred when the entire population is infected, while their use is disfavoured in the absence of a disease. To guarantee this, we need to enforce in \eqref{payoff} that, for any $x \in [0,1]$, $\pi_1(t)<\pi_0(t)$, if $y=0$, and  $\pi_0(t)<\pi_1(t)$,  if $y=1$. These conditions are satisfied by making the following assumption. 
\begin{assumption}\label{as:c}
We assume that $c>1$ and $\zeta>c+1$.
\end{assumption}

\begin{figure*}
\centering
\subfloat[$\zeta=5$]{\label{fig:gamma1}\includegraphics[width=0.29\linewidth]{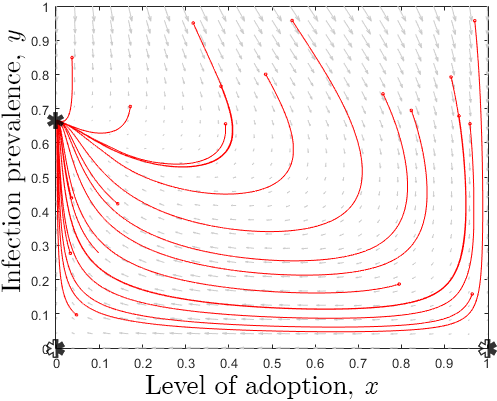}}\qquad
\subfloat[$\zeta=8$]{\label{fig:gamma2}\includegraphics[width=0.29\linewidth]{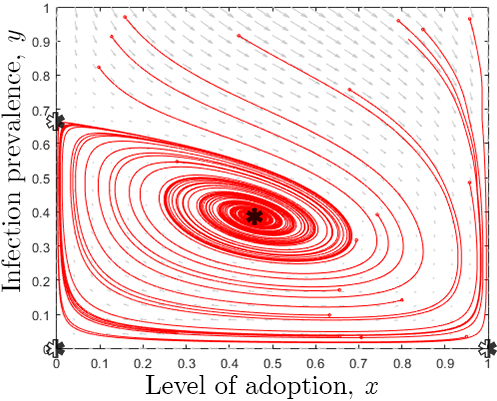}} \qquad
\subfloat[$\zeta=9.5$]{\label{fig:gamma3}\includegraphics[width=0.29\linewidth]{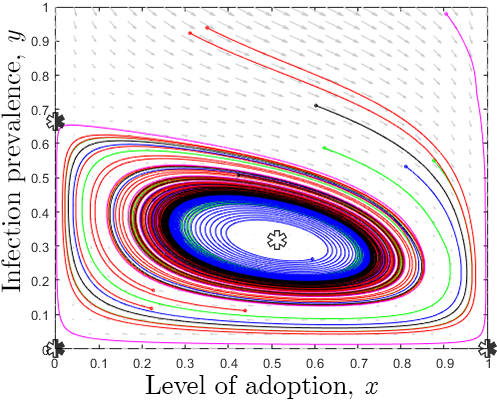}}\vspace{-0.15cm}
\caption{Simulations of \eqref{simplesystem} for different values of the risk perception parameter (in the captions) for Example~\ref{ex}. %The other model parameters are $c=3$, $\alpha=3$, $\lambda=0.5$, and $\mu =1$. 
Stable equilibria, saddle points and unstable equilibria are marked with a black, black-white and white asterisk, respectively.}\label{zeta}
\end{figure*}
\section{Main Results}\label{sectionresults}
We study the asymptotic behaviour and the equilibria characteristics of the behavioural--epidemic model using the mean-field system in \eqref{simplesystem}. The following lemma characterises the equilibria of the system in \eqref{simplesystem} and their local stability properties. Its proof can be found in Appendix \ref{app:proofprop1}.

\begin{lemma}\label{propequi}
Under Assumption \ref{as:c}, \eqref{simplesystem} has at most five equilibria: three on the boundary of $[0,1]\times[0,1]$, two in the interior. The three equilibria on the boundary are:
\begin{enumerate}
        \item[i)] the DFE $(0,0)$, which is locally asymptotically stable if  $\lambda \leq \tfrac{\mu}{2\alpha}$ (with exponential stability if strict inequality holds), and a saddle point if $\lambda > \tfrac{\mu}{2\alpha}$;
        \item[ii)] the DFE $(1,0)$, which is a saddle point;
        \item[iii)] the protection-free EE $(0,1-\tfrac{\mu}{2\alpha \lambda })$, which exists iff $\lambda> \tfrac{\mu}{2\alpha}$. When it exists, it is locally asymptotically stable if $\zeta\leq  \tfrac{2\alpha \lambda(1+c)}{2\alpha \lambda-\mu}$ (with exponential stability if strict inequality holds) and a saddle point if $\zeta> \frac{2\alpha \lambda(1+c)}{2\alpha \lambda-\mu}$. 
    \end{enumerate}
Next, define \begin{equation*}
 \scalemath{0.89}{   \beta_{\pm} := \dfrac{1}{4} \left[c+3-\zeta   \pm  \sqrt{(c+3-\zeta)^2 + 8\left[\zeta\left(1-\tfrac{\mu}{2\alpha \lambda}\right) -1-c\right] }\right]}. 
\end{equation*}
The two EEs in the interior are:
\begin{enumerate}
  \item[iv)] $\scalemath{0.89}{\big(\beta_{+},1-\tfrac{\mu}{2\alpha \lambda(1-\beta_{+})}\big)}$, which exists iff $ \scalemath{0.89}{\lambda> \tfrac{\mu}{2\alpha(1-\beta_{+})}}$ and one of the following conditions is satisfied: a) $\scalemath{0.855}{c-1 + \tfrac{2 \mu}{\alpha \lambda} +\sqrt{\tfrac{4 \mu}{\alpha \lambda} \left(c-1 + \tfrac{ \mu}{ \alpha \lambda} \right)} \le \zeta < c+3}$, where necessarily $\scalemath{0.855}{ c < \tfrac{4 \alpha \lambda}{\mu}-3}$;
or $\scalemath{0.855}{\zeta \ge c+3}$ and  $\scalemath{0.855}{ \zeta> \tfrac{2\alpha \lambda(1+c)}{2\alpha \lambda-\mu}}$. 
  If it exists, it is locally exponentially stable if $\tfrac{4 \alpha \lambda}{\zeta} (1-\beta_+)^2 < \mu < 2(1-\beta_+)(\alpha \lambda- \beta_+)$, and unstable if $\mu<\tfrac{4 \alpha \lambda}{\zeta} (1-\beta_+)^2$ or $\mu > 2(1-\beta_+)(\alpha \lambda- \beta_+)$.
  \item[v)] $\scalemath{0.855}{\big(\beta_{-},1-\tfrac{\mu}{2\alpha \lambda(1-\beta_{-})}\big)}$, which exists iff $ \scalemath{0.89}{\lambda> \tfrac{\mu}{2\alpha(1-\beta_{-})}}$ and $\scalemath{0.855}{c-1 + \tfrac{2 \mu}{\alpha \lambda} +\sqrt{\tfrac{4 \mu}{\alpha \lambda} \left(c-1 + \tfrac{ \mu}{ \alpha \lambda} \right)} \le \zeta   < \min\left\{ c+3, \tfrac{2\alpha \lambda(c+1)}{2\alpha\lambda-\mu} \right\}}$, where necessarily $\scalemath{0.855}{c < \tfrac{4 \alpha \lambda}{\mu}-3}$.   If it exists, it is locally exponentially stable if $\tfrac{4 \alpha \lambda}{\zeta} (1-\beta_-)^2 < \mu < 2(1-\beta_-)(\alpha \lambda- \beta_-)$, and unstable if $\mu<\tfrac{4 \alpha \lambda}{\zeta} (1-\beta_-)^2$ or $\mu > 2(1-\beta_-)(\alpha \lambda- \beta_-)$. 
\end{enumerate}
\end{lemma}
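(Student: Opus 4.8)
The plan is to locate every equilibrium by exploiting the factored form of \eqref{simplesystem}, then to read off local stability from the linearisation at each one. Imposing $\dot x=0$ forces $x\in\{0,1\}$ or $2x+\zeta y-1-c=0$, and $\dot y=0$ forces $y=0$ or $(1-x)(1-y)=\mu/(2\alpha\lambda)$. Crossing these alternatives and discarding points outside $[0,1]^2$ leaves exactly three boundary candidates $(0,0)$, $(1,0)$, $(0,1-\tfrac{\mu}{2\alpha\lambda})$ -- here $y=0$ with $2x-1-c=0$ gives $x=\tfrac{1+c}{2}>1$ by Assumption~\ref{as:c}, $x=1$ is incompatible with $(1-x)(1-y)=\mu/(2\alpha\lambda)>0$, and on $y=1$ one has $\dot y=-\mu y<0$ -- plus the interior candidates solving the pair $2x+\zeta y-1-c=0$, $(1-x)(1-y)=\mu/(2\alpha\lambda)$. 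Eliminating $y=\tfrac{1+c-2x}{\zeta}$ from the second equation yields the quadratic $g(x):=2x^2-(3+c-\zeta)x+\big(1+c-\zeta+\tfrac{\mu\zeta}{2\alpha\lambda}\big)=0$, whose roots, after matching coefficients with the radical in the statement, are precisely $\beta_\pm$, the $y$-coordinate being $1-\tfrac{\mu}{2\alpha\lambda(1-x)}$. Since $(0,0)$ and $(1,0)$ always solve the system, this already gives the ``at most five'' count.

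I would then settle existence. For the protection-free EE this is immediate: $1-\tfrac{\mu}{2\alpha\lambda}\in(0,1)$ iff $\lambda>\tfrac{\mu}{2\alpha}$. For the interior EEs the analysis of $g$ hinges on three quantities: the discriminant, which one rewrites as $D=(\zeta-c+1)^2-\tfrac{4\zeta\mu}{\alpha\lambda}$; the endpoint values $g(1)=\tfrac{\mu\zeta}{2\alpha\lambda}>0$ (always) and $g(0)=1+c-\zeta\big(1-\tfrac{\mu}{2\alpha\lambda}\big)$, whose sign flips exactly at $\zeta=\tfrac{2\alpha\lambda(1+c)}{2\alpha\lambda-\mu}$; and the vertex abscissa $\tfrac{3+c-\zeta}{4}$, positive iff $\zeta<c+3$. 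By sign analysis of $g$ and Vieta's formulas these locate $\beta_\pm$ relative to $0$ and $1$: $g(1)>0$ together with $\zeta>c-1$ forces both real roots below $1$; solving $D\ge0$ for $\zeta$ gives the stated threshold $c-1+\tfrac{2\mu}{\alpha\lambda}+\sqrt{\tfrac{4\mu}{\alpha\lambda}\big(c-1+\tfrac{\mu}{\alpha\lambda}\big)}$ (its companion root being ruled out by $\zeta>c+1$); requiring this threshold to fall below $c+3$ produces the side condition $c<\tfrac{4\alpha\lambda}{\mu}-3$; and the sign split of $g(0)$ separates the regime $\zeta<\min\{c+3,\tfrac{2\alpha\lambda(1+c)}{2\alpha\lambda-\mu}\}$ in which $\beta_-$ survives from the regime $\zeta\ge c+3$ in which only $\beta_+$ survives. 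Finally, imposing $y_{\beta_\pm}\in(0,1)$, equivalently $\lambda>\tfrac{\mu}{2\alpha(1-\beta_\pm)}$, closes items iv)--v).

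For stability I would compute the Jacobian $J(x,y)$ of \eqref{simplesystem} once. At the three boundary equilibria $J$ is triangular, so its eigenvalues are immediate: $\{-(1+c),\,2\alpha\lambda-\mu\}$ at $(0,0)$, $\{c-1,\,-\mu\}$ at $(1,0)$, and $\{\zeta(1-\tfrac{\mu}{2\alpha\lambda})-1-c,\ \mu-2\alpha\lambda\}$ at the protection-free EE; under Assumption~\ref{as:c} (and the existence conditions) their signs yield precisely the asserted (exponential) stability/saddle dichotomies. At an interior equilibrium $(\beta,y_\beta)$ I would substitute the two equilibrium identities $2\beta+\zeta y_\beta-1-c=0$ and $2\alpha\lambda(1-\beta)(1-y_\beta)=\mu$ to reduce $J$ to
\[
J=\begin{pmatrix} 2\beta(1-\beta) & \zeta\beta(1-\beta)\\ -2\alpha\lambda y_\beta(1-y_\beta) & -2\alpha\lambda(1-\beta)y_\beta\end{pmatrix},
\]
whence $\operatorname{tr}J=2(1-\beta)(\beta-\alpha\lambda)+\mu$ and $\det J=\beta y_\beta\big(\zeta\mu-4\alpha\lambda(1-\beta)^2\big)$; since $\beta,y_\beta>0$, the Routh--Hurwitz conditions $\operatorname{tr}J<0<\det J$ translate verbatim into $\tfrac{4\alpha\lambda}{\zeta}(1-\beta)^2<\mu<2(1-\beta)(\alpha\lambda-\beta)$, while $\det J<0$ (a saddle) or $\operatorname{tr}J>0$ give instability.

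The Jacobian evaluations are routine bookkeeping; the main obstacle is the existence analysis of items iv)--v). One must simultaneously control reality of $\beta_\pm$ (sign of $D$), membership of each $\beta_\pm$ in $(0,1)$ (requiring the case split $\zeta\lessgtr c+3$ and careful tracking of the sign of $g(0)$), and membership of the corresponding $y_{\beta_\pm}$ in $(0,1)$, and then verify that the discarded configurations -- the companion root of $D$, and the cases with $\beta_-\le 0$ or $\beta_+\le 0$ -- are genuinely excluded by Assumption~\ref{as:c}. Getting all these inequalities mutually consistent, and in the exact form displayed in the statement, is where essentially all the effort lies.
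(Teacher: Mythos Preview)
Your approach is correct and essentially identical to the paper's: both locate the equilibria by crossing the factored nullclines, derive the same quadratic for the interior $x$-coordinates (the paper manipulates the explicit root formulas for $\beta_\pm$ directly while you use the equivalent sign analysis of $g(0)$, $g(1)$, vertex and discriminant), and classify stability via the triangular Jacobian at the boundary points and the trace--determinant criterion at the interior ones. The only item you gloss over is the non-hyperbolic borderline cases $\lambda=\tfrac{\mu}{2\alpha}$ and $\zeta=\tfrac{2\alpha\lambda(1+c)}{2\alpha\lambda-\mu}$, where linearisation alone is inconclusive and a short direct argument (which the paper invokes but also omits) is needed to obtain the asserted asymptotic stability.
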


Lemma \ref{propequi} leads to the establishment of the \textit{epidemic threshold} for the system in \eqref{simplesystem}, i.e.\ the conditions under which the system converges to one of the DFEs.

\begin{theorem}\label{prop1}
Assume that Assumption \ref{as:c} holds and $\lambda\leq   \tfrac{\mu}{2\alpha}$. Then, if $x(0)<1$, the system in \eqref{simplesystem} converges to the DFE $(0,0)$; otherwise, it converges to the DFE  $(1,0)$. 
\end{theorem}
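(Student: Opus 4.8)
The plan is to separate the two regimes in the statement and, in the main regime $x(0)<1$, to show that the coordinates decay in sequence: $y(t)\to 0$ first, which then forces $x(t)\to 0$.

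First I would record that the line $\{x=1\}$ is invariant for \eqref{simplesystem}, since the factor $x(1-x)$ in $\dot x$ vanishes there; by uniqueness of solutions (the vector field is Lipschitz and trajectories stay in the compact invariant set $[0,1]^2$ by Lemma~\ref{invariantdomain}), $x(0)=1$ gives $x(t)\equiv 1$, whence $\dot y=-\mu y$ and $(x(t),y(t))\to(1,0)$; and $x(0)<1$ gives $x(t)<1$ for all $t\ge 0$. So assume $x(0)<1$. To prove $y(t)\to 0$, I would bound $\dot y$ from above using $x\ge 0$, $y\le 1$ and the hypothesis $2\alpha\lambda\le\mu$:
\[
\dot y=y\big[2\alpha\lambda(1-x)(1-y)-\mu\big]\le y\big[2\alpha\lambda(1-y)-\mu\big]\le-2\alpha\lambda\,y^2 ,
\]
and then a comparison argument (i.e.\ $\tfrac{d}{dt}(1/y)\ge 2\alpha\lambda$ as long as $y>0$) yields $y(t)\le y(0)/(1+2\alpha\lambda\,y(0)\,t)\to 0$. (When $\lambda<\mu/(2\alpha)$ one even gets exponential decay, but the polynomial bound suffices.)

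With $y(t)\to 0$ in hand, Assumption~\ref{as:c} enters only through $c>1$: since $x(t)<1$, the driving factor in $\dot x$ obeys $2x+\zeta y-1-c<1-c+\zeta y$, which is strictly negative once $\zeta y(t)<c-1$, hence for all $t$ past some time $T$. Thus $\dot x\le 0$ on $[T,\infty)$ with $\dot x<0$ whenever $x\in(0,1)$, so $x(t)$ is non-increasing there and converges to some $x^\star\in[0,x(T)]\subset[0,1)$. To exclude $x^\star>0$, note that for $t\ge T$ one has $x(t)(1-x(t))\ge x^\star(1-x(T))$ while $2x(t)+\zeta y(t)-1-c\le 2x(T)-1-c+\zeta y(t)\to 2x(T)-1-c<0$; if $x^\star>0$ this makes $\dot x(t)$ eventually bounded above by a strictly negative constant, forcing $x(t)\to-\infty$, a contradiction. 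Hence $x^\star=0$, and together with $y(t)\to 0$ the trajectory converges to $(0,0)$.

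The proof is essentially elementary; the one place requiring care is this last step, where it is not enough that the driving term $2x+\zeta y-1-c$ is merely negative — one must observe that it stays bounded away from $0$ along the trajectory (using $x<1$ and $c>1$) so that $x$ cannot stall at a positive value. One could alternatively invoke LaSalle's invariance principle with $V=y$ to obtain $y\to 0$ and then analyse the reduced dynamics on $\{y=0\}$, but the direct estimates above are shorter and avoid a separate discussion of $\omega$-limit sets.
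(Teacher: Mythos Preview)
Your proof is correct and follows essentially the same two-step strategy as the paper: first bound $\dot y$ above by the scalar logistic $2\alpha\lambda y(1-y)-\mu y$ to obtain $y(t)\to 0$, then use $c>1$ together with $x<1$ to make the bracket $2x+\zeta y-1-c$ eventually negative and bounded away from zero, forcing $x(t)\to 0$. The paper's version is marginally more compact in the second step (it picks $\bar t$ so that $\zeta y\le (c-1)/2$ and gets $\dot x<-\tfrac{c-1}{2}\,x(1-x)$ directly, rather than your monotone-limit contradiction), but the substance is identical.
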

\begin{proof}
Let $f(y)=2\alpha\lambda y(1-y)-\mu y$. By~\cite[Lemma 4.1]{Mei2017}, the solution of $\dot z=f(z)$ converges to $z=0$ if $\lambda\leq  \tfrac{\mu}{2\alpha}$. As a consequence of the nonnegativity of $y(t)$ (Lemma~\ref{invariantdomain}) and the fact that $\dot y\leq f(y)$, $y(t)$ converges to $0$. If $x(0)=1$, convergence to $(1,0)$ is straightforward, since $x=1$ is an invariant manifold. Otherwise, since $y(t)\to 0$ and $c>1$, there exists a time $\bar t>0$ such that $y(t)\leq\frac{c-1}{2\zeta}$, for any $t\geq\bar t$. From \eqref{simplesystem}, we observe that for any $\scalemath{1}{t>\bar t}$, $\scalemath{1}{\dot x = x(1-x)(2x+\zeta y-1-c)<-x(1-x)(c-1)}/2$, which yields the claim.
\end{proof}

Theorem~\ref{prop1} fully characterises the behaviour of the system when the disease is not highly infectious, i.e.\ $\lambda\leq \tfrac{\mu}{2\alpha}$. In the following, we will consider the opposite scenario  $\lambda> \tfrac{\mu}{2\alpha}$. We also assume $c\ge \tfrac{4 \alpha \lambda}{\mu}-3$, ensuring that the unique endemic equilibrium only exists for a high enough level of risk perception $\zeta$. Under an upper bound of $c$, which depends on the other model parameters, Lemma \ref{propequi} reduces to the following proposition, which clearly illustrates the role of the risk perception $\zeta$. The proof is reported in Appendix \ref{app:proofprop2}.

\begin{proposition}\label{prop2}
Let $\lambda> \tfrac{\mu}{2\alpha}$ and $\tfrac{4 \alpha \lambda}{\mu}-3\le c<\tfrac{32 \alpha \lambda}{5 \mu}-3$. Under Assumption~\ref{as:c}, the following hold:
\begin{enumerate}
    \item[i)] if $\zeta < \tfrac{2 \alpha \lambda(1+c)}{2\alpha \lambda-\mu}$, then \eqref{simplesystem} has three equilibria: the DFEs $(0,0)$ and $(1,0)$, which are saddle points, and the (locally) exponentially stable EE $(0,1-\tfrac{\mu}{2\alpha \lambda })$;
    \item[ii)] if $\zeta > \tfrac{2 \alpha \lambda(1+c)}{2\alpha \lambda-\mu}$, then \eqref{simplesystem} has four equilibria: three saddle points---the DFEs $(0,0)$, $(1,0)$, and the EE $(0,1-\tfrac{\mu}{2\alpha \lambda })$---and the interior EE $(\beta_{+},1-\tfrac{\mu}{2\alpha \lambda(1-\beta_{+})})$, which is (locally) exponentially stable iff $\zeta$ satisfies all of the following three conditions: a) $ \scalemath{0.8}{  \zeta > c-1 + \tfrac{25\mu}{8\alpha \lambda} + \tfrac{5}{2} \sqrt{\tfrac{\mu}{\alpha \lambda} (c-1+ \tfrac{25\mu}{16\alpha \lambda})} }$, 
b) $ \scalemath{0.8}{  \zeta > \tfrac{\alpha \lambda}{2 \alpha \lambda- \mu} ((c+1) [1 - \sqrt{(\alpha \lambda -1)^2 +2 \mu}] + \alpha \lambda(c-3) +2 \mu )}$, and c) $ \scalemath{0.8}{  \zeta< \tfrac{\alpha \lambda}{2 \alpha \lambda- \mu} ((c+1) \big[1 + \sqrt{(\alpha \lambda -1)^2 +2 \mu}] + \alpha \lambda(c-3) +2 \mu )}$. 
\end{enumerate}
\end{proposition}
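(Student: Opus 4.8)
The plan is to derive Proposition~\ref{prop2} from Lemma~\ref{propequi} by specialising to the parameter window $\tfrac{4\alpha\lambda}{\mu}-3\le c<\tfrac{32\alpha\lambda}{5\mu}-3$, treating the five potential equilibria one at a time. The two DFEs are immediate: by Lemma~\ref{propequi}(i), $(0,0)$ is a saddle since $\lambda>\tfrac{\mu}{2\alpha}$; by Lemma~\ref{propequi}(ii), $(1,0)$ is always a saddle. The protection-free EE $(0,1-\tfrac{\mu}{2\alpha\lambda})$ exists because $\lambda>\tfrac{\mu}{2\alpha}$, and Lemma~\ref{propequi}(iii) says it is locally exponentially stable when $\zeta<\tfrac{2\alpha\lambda(1+c)}{2\alpha\lambda-\mu}$ and a saddle when $\zeta>\tfrac{2\alpha\lambda(1+c)}{2\alpha\lambda-\mu}$, which already accounts for the stable EE in case i) and for this equilibrium being a saddle in case ii).

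Next I would determine which interior equilibria survive. The assumption $c\ge\tfrac{4\alpha\lambda}{\mu}-3$ is precisely what makes the clause ``necessarily $c<\tfrac{4\alpha\lambda}{\mu}-3$'' in Lemma~\ref{propequi}(v) unattainable, so the $\beta_-$ EE never exists in this regime, and for the same reason option a) in the existence condition of Lemma~\ref{propequi}(iv) is excluded; hence the $\beta_+$ EE exists iff option b) holds, i.e.\ $\zeta\ge c+3$, $\zeta>\tfrac{2\alpha\lambda(1+c)}{2\alpha\lambda-\mu}$, and $\lambda>\tfrac{\mu}{2\alpha(1-\beta_+)}$. A one-line computation shows that $\mu(c+3)-4\alpha\lambda\ge 0$ is equivalent (using $2\alpha\lambda-\mu>0$) to $\tfrac{2\alpha\lambda(1+c)}{2\alpha\lambda-\mu}\ge c+3$, so under our lower bound on $c$ the inequality $\zeta>\tfrac{2\alpha\lambda(1+c)}{2\alpha\lambda-\mu}$ already implies $\zeta>c+3$; I would then verify $\lambda>\tfrac{\mu}{2\alpha(1-\beta_+)}$ directly (it is equivalent to $\beta_+<1-\tfrac{\mu}{2\alpha\lambda}$, which also makes the EE's $y$-coordinate lie in $(0,1)$). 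This establishes that for $\zeta<\tfrac{2\alpha\lambda(1+c)}{2\alpha\lambda-\mu}$ only $(0,0)$, $(1,0)$ and the stable protection-free EE are present (case i)), whereas for $\zeta>\tfrac{2\alpha\lambda(1+c)}{2\alpha\lambda-\mu}$ the $\beta_+$ EE additionally appears and the protection-free EE becomes a saddle (case ii)).

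The computational core is rewriting the stability window of the $\beta_+$ EE from Lemma~\ref{propequi}(iv), namely $\tfrac{4\alpha\lambda}{\zeta}(1-\beta_+)^2<\mu<2(1-\beta_+)(\alpha\lambda-\beta_+)$, as conditions a)--c) on $\zeta$. Squaring the definition of $\beta_+$ yields the quadratic relation $2\beta_+^2+(\zeta-c-3)\beta_+ + \big(1+c-\zeta+\tfrac{\mu\zeta}{2\alpha\lambda}\big)=0$, which I would use to eliminate $\beta_+^2$ from each of the two inequalities, leaving in each case an expression that is affine in $\beta_+$; then isolating the remaining square root (that is, $4\beta_+-(c+3-\zeta)$) and squaring converts each inequality into a quadratic inequality in $\zeta$. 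The right-hand inequality $\mu<2(1-\beta_+)(\alpha\lambda-\beta_+)$ yields the two-sided bound b)--c), whose endpoints involve the square root $\sqrt{(\alpha\lambda-1)^2+2\mu}$, while the left-hand inequality $\tfrac{4\alpha\lambda}{\zeta}(1-\beta_+)^2<\mu$ yields the single lower bound a). The extra hypothesis $c<\tfrac{32\alpha\lambda}{5\mu}-3$ enters exactly here: it is what makes the relevant discriminants positive (so that b) and c) genuinely bracket a nonempty interval), what controls the sign of the isolated square root (so that the squaring steps are equivalences rather than one-way implications), and what keeps a), b) and c) mutually compatible.

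I expect the main obstacle to be precisely this last step: the sign bookkeeping when isolating and squaring the radical defining $\beta_+$, and the verification that the window $\tfrac{4\alpha\lambda}{\mu}-3\le c<\tfrac{32\alpha\lambda}{5\mu}-3$ makes all intermediate quadratics in $\zeta$ well behaved and the resulting conditions non-vacuous. By comparison, the existence bookkeeping---ruling out $\beta_-$, collapsing option b) to a single inequality, and checking $\lambda>\tfrac{\mu}{2\alpha(1-\beta_+)}$ and $\beta_+\in(0,1)$---should be routine.
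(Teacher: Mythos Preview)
Your proposal is correct and follows essentially the same route as the paper: reduce to Lemma~\ref{propequi}, use $c\ge\tfrac{4\alpha\lambda}{\mu}-3$ to eliminate the $\beta_-$ equilibrium and collapse existence of the $\beta_+$ EE to the single inequality $\zeta>\tfrac{2\alpha\lambda(1+c)}{2\alpha\lambda-\mu}$ (together with the check $\beta_+<1-\tfrac{\mu}{2\alpha\lambda}$), then rewrite each half of the stability window $\tfrac{4\alpha\lambda}{\zeta}(1-\beta_+)^2<\mu<2(1-\beta_+)(\alpha\lambda-\beta_+)$ as an inequality with the radical isolated, verify the signs, and square. One small correction on the bookkeeping: the discriminant $(\alpha\lambda-1)^2+2\mu$ behind b)--c) is manifestly positive with no hypothesis on $c$; the upper bound $c<\tfrac{32\alpha\lambda}{5\mu}-3$ is used only in the derivation of a), to ensure the side opposite the radical is positive before squaring.
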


Proposition~\ref{prop2} focuses on local stability and instability of endemic equilibria. In the following result we establish  sufficient conditions under which sustained oscillations with periodic epidemic waves occur, with proof in Appendix \ref{app:prooftheo1}.

\begin{theorem}\label{theo1} Let Assumption~\ref{as:c} hold,  $\lambda> \tfrac{\mu}{2\alpha}$, and $\tfrac{4 \alpha \lambda}{\mu}-3\le c<\tfrac{32 \alpha \lambda}{5 \mu}-3$. Furthermore, assume $\scalemath{0.92}{\zeta>\tfrac{2\alpha \lambda(1+c)}{2\alpha \lambda-\mu}}$, $\scalemath{0.92}{\zeta > c-1 + \tfrac{25\mu}{8\alpha \lambda} + \tfrac{5}{2} \sqrt{\tfrac{\mu}{\alpha \lambda} (c-1+ \tfrac{25\mu}{16\alpha \lambda})} }$, and $\scalemath{0.92}\zeta>  \tfrac{\alpha \lambda}{2 \alpha \lambda- \mu} [(c+1) [1 + \sqrt{(\alpha \lambda -1)^2 +2 \mu}] + \alpha \lambda(c-3) +2 \mu  ]$. 
If the initial condition $x(0),y(0)$ is in the interior of the domain $[0,1]\times[0,1]$, then the system in \eqref{simplesystem} converges to a periodic solution, within the domain $[0,1] \times [0, 1-\frac{\mu}{2\lambda\alpha}]$.
\end{theorem}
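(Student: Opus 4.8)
The plan is to obtain the periodic solution, and convergence to it, from the Poincar\'e--Bendixson theorem applied on the compact rectangle $\bar{D}:=[0,1]\times[0,1-\frac{\mu}{2\alpha\lambda}]$. First I would assemble what the earlier results give under the hypotheses of Theorem~\ref{theo1}. By Lemma~\ref{invariantdomain} the square $[0,1]^2$ is positively invariant, and since the coordinate edges $\{x=0\}$, $\{x=1\}$, $\{y=0\}$ are invariant, uniqueness of solutions of \eqref{simplesystem} forces every trajectory started in the open square $(0,1)^2$ to stay in $(0,1)\times(0,1]$ for all $t\ge0$. Using $0\le 1-x\le 1$ and $y(1-y)\ge 0$ on $[0,1]^2$ gives $\dot y\le 2\alpha\lambda y(1-y)-\mu y$; since $\lambda>\frac{\mu}{2\alpha}$, every solution of the scalar logistic equation $\dot z=2\alpha\lambda z(1-z)-\mu z$ starting in $(0,1]$ converges to $1-\frac{\mu}{2\alpha\lambda}$, so by the comparison principle $\limsup_{t\to\infty}y(t)\le 1-\frac{\mu}{2\alpha\lambda}$. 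Hence the $\omega$-limit set $\Omega$ of any interior trajectory is a nonempty, compact, connected, invariant subset of $\bar{D}$. Finally, by Proposition~\ref{prop2}(ii) (whose hypotheses are implied by those of Theorem~\ref{theo1}) the only equilibria in $\bar{D}$ are the three boundary saddles $(0,0)$, $(1,0)$, $(0,1-\frac{\mu}{2\alpha\lambda})$ and the interior equilibrium $(\beta_+,1-\frac{\mu}{2\alpha\lambda(1-\beta_+)})$; I would verify that this interior equilibrium is a repeller (an unstable node or focus) by checking that the trace and the determinant of the Jacobian of \eqref{simplesystem} evaluated there are both positive, the inequalities imposed in Theorem~\ref{theo1} being precisely those placing the system past the Hopf bifurcation of Lemma~\ref{propequi}(iv).

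The crux is then to show that $\Omega$ is neither a single equilibrium nor a polycycle, so that the Poincar\'e--Bendixson trichotomy leaves only a periodic orbit. For this I would linearise \eqref{simplesystem} at each of the three boundary saddles: in each case the Jacobian is (lower) triangular, its stable eigenvector is tangent to the adjacent invariant coordinate edge, so by invariance of that edge and uniqueness the stable manifold of $(0,0)$ is the bottom edge, that of $(1,0)$ the right edge, and that of $(0,1-\frac{\mu}{2\alpha\lambda})$ the left edge. Consequently no interior trajectory lies on a saddle's stable manifold, so $\Omega$ is none of the saddles, and it is not the repelling interior equilibrium either. Moreover, since every boundary saddle has its stable manifold confined to $\partial[0,1]^2$ and the interior equilibrium has no stable manifold, no heteroclinic or homoclinic orbit can run through the interior; and the only heteroclinic connections on the boundary---the bottom edge $(1,0)\to(0,0)$ and the left edge $(0,0)\to(0,1-\frac{\mu}{2\alpha\lambda})$---form a path, not a closed loop, because the top edge $y=1-\frac{\mu}{2\alpha\lambda}$ is not invariant: there $\dot y=-\mu x(1-\frac{\mu}{2\alpha\lambda})<0$ for $x>0$, i.e.\ the flow crosses it transversally into $\bar{D}$. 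Hence $\bar{D}$ contains no polycycle, and $\Omega$ must be a periodic orbit. Since $\Omega\subseteq\bar{D}$ and $\Omega$ cannot meet the invariant lines $\{x=0\}$, $\{x=1\}$, $\{y=0\}$, it lies in $(0,1)\times(0,1-\frac{\mu}{2\alpha\lambda}]$ (and, by a standard index argument, encircles the interior equilibrium); therefore every interior trajectory converges to a periodic solution contained in $[0,1]\times[0,1-\frac{\mu}{2\alpha\lambda}]$, which is the claim.

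The step I expect to be the main obstacle is this second paragraph: cleanly ruling out every limit set supported on or near the boundary. One must be sure that $\partial\bar{D}$ is not itself an invariant closed curve---it is not, precisely because the top edge is crossed transversally---and, relatedly, that no homoclinic loop can occur. This is exactly why it matters that the interior equilibrium is a genuine source and not a saddle: a saddle there could serve as the base of a homoclinic loop acting as the $\omega$-limit set. Confirming the signs of the trace and determinant of the Jacobian at the interior equilibrium---equivalently, that the hypotheses of Theorem~\ref{theo1} put us in the unstable focus/node sub-case of Lemma~\ref{propequi}(iv)---is therefore the technical heart of the argument; everything else is the relatively routine bookkeeping of invariant sets and linearisations described above.
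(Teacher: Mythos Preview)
Your argument is correct and reaches the same endpoint as the paper---the generalised Poincar\'e--Bendixson theorem on the strip $[0,1]\times[0,1-\tfrac{\mu}{2\alpha\lambda}]$---but the route you take to exclude the boundary equilibria from the $\omega$-limit set is genuinely different. The paper works analytically: for the boundary $x=1$ it partitions a thin strip into two regions $\mathcal R_\varepsilon$ and $\mathcal S_\varepsilon$, derives uniform bounds on $\dot x$ and $\dot y$ there, and uses a Gronwall--Bellman estimate to show that any trajectory entering $\mathcal R_\varepsilon$ exits into $\mathcal S_\varepsilon$ in bounded time while remaining a definite distance from $x=1$; it then asserts that ``a similar argument'' handles $x=0$ and $y=0$. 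You instead exploit the dynamical structure: since $\{x=0\}$, $\{x=1\}$, $\{y=0\}$ are invariant and the Jacobians at the three boundary saddles are triangular with stable eigenvectors tangent to those edges, the global stable manifolds of the saddles are confined to $\partial[0,1]^2$, so no interior trajectory can converge to them; and since the interior equilibrium has $\mathrm{tr}>0$, $\det>0$ under the stated inequalities on $\zeta$ (your key verification), it is a genuine source with empty stable set, ruling out homoclinic loops. Your observation that the only boundary heteroclinics $(1,0)\!\to\!(0,0)\!\to\!(0,y^*)$ form an open chain---because the top edge is crossed transversally---then kills any polycycle; equivalently, the interior branch of $W^u((0,y^*))$ cannot terminate at any equilibrium, so $(0,y^*)$ cannot lie in a nontrivial $\omega$-limit set. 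Your approach is cleaner and avoids the somewhat ad hoc estimates; the paper's approach, on the other hand, yields slightly more (an explicit forward-invariant sub-rectangle bounded away from the edges) and does not invoke invariant-manifold theory.
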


We conclude the section by presenting an example.

\begin{example}\label{ex}
Let $c=3$, $\alpha=3$, $\lambda=0.5$, and $\mu =1$. In Fig.~\ref{fig:gamma1}, we set the risk perception to $\zeta=5$, which satisfies the conditions in item i) of Proposition~\ref{prop2}. Hence, the only (locally) stable equilibrium of \eqref{simplesystem} is the EE $(0,1-\tfrac{\mu}{2\alpha \lambda })$. Simulations suggest that all trajectories converge to it. 
In Fig.~\ref{fig:gamma2}, we increase the risk perception to $\zeta=8$, which satisfies all the conditions in item ii) of Proposition~\ref{prop2}. Hence, the interior EE is locally stable: all trajectories converge to it, suggesting globally stability. 
Finally, we set $\zeta=9.5$, which satisfies the conditions of Theorem~\ref{theo1}. Consistently, all the trajectories in Fig.~\ref{fig:gamma3} converge to a limit cycle.
\end{example}

\section{Conclusion}\label{sec:con}
We studied a behavioural--epidemic model in which human behaviour and epidemics co-evolve in a mutually influencing manner. Employing a mean-field approach, we painted an extensive picture of the system behaviour, including a stability analysis of the equilibria and the expression of the epidemic threshold. Furthermore, we explored the role of risk perception in the occurrence of periodic oscillations and established conditions for global convergence to such a periodic solution. 

Our promising results pave the way for several avenues of future research. First, the numerical findings suggest that our local stability results might be extended towards obtaining global results. Second, interventions may be incorporated, towards designing control policies to favour a collective behavioural response and mitigate an epidemic outbreak.Third, our theoretical analysis relies on the simplifying Assumption~\ref{assumptionsimple}. Efforts should be placed towards extending our theoretical findings to more general scenarios, including non-trivial directed networks. Finally, further factors should be incorporated into the model, including limited effectiveness of self-protections, accumulation of socio-economic fatigue, and nonlinear terms to capture more complex risk perception (as in~\cite{ye2021game}). This will be key for real-world applications.

%\begin{appendices}
\appendix

\subsection{Proof of Proposition~\ref{prop:system}}\label{app:proofsystem}
In the limit $n \to \infty$,  \eqref{meanfield_evox} reduces to 
$
      \dot p_x^{(i)}  =(1-p_x^{(i)}) x(x+ \zeta y) -    p_x^{(i)}(1- x)(1- x+c)$. Similarly, using item b) of Assumption~\ref{assumptionsimple} and \eqref{infectionrate}, we write \eqref{meanfield_evoy} as $
      \dot p_y^{(i)}  =\lambda(1-p_x^{(i)})(1-p_y^{(i)})2\alpha\frac{n}{n-1} y -\mu p_y^{(i)}
$. 
Next, observe that, under item c) of Assumption~\ref{assumptionsimple}, $p_x^{(i)}(t)$ is the same  $\forall\,i\in\mathcal V$, so we drop the index $i$ and write  $p_x^{(i)}(t)=\bar x(t)=x(t)$, where the last equality holds for $n\to \infty$. Finally, we combine the expressions above and \eqref{macro}, to derive \eqref{simplesystem}. 

\subsection{Proof of Lemma~\ref{propequi}}\label{app:proofprop1}
Solving $ \dot{y} =0$ yields $y=0$ or $(1-x)(1-y) = \tfrac{\mu}{2\alpha \lambda}$, where for the latter, $\tfrac{\mu}{2\alpha \lambda}>0$ necessarily requires $x<1$ and $y<1$ as conditions. Here,  $(1-x)(1-y) = \tfrac{\mu}{2\alpha \lambda}$ with $x<1$ and $y<1$ can be written as $y=1-\tfrac{\mu}{2\alpha \lambda(1-x)}$. If $y=0$, then the solutions to $ 0= \dot{x} = x(1-x) (2x-1 -c)$ in the domain $[0,1]$ are given by $x=0$ and  $x=1$. Thus, the only DFEs are $(0,0)$ and $(1,0)$. Next, let us consider equilibria $(x,y)$ with $y=1-\tfrac{\mu}{2\alpha \lambda(1-x)}\in (0,1)$, $x \in [0,1)$, and let $\mu < 2\alpha \lambda (1-x)$ for existence of the equilibria.
Substituting $y=1-\tfrac{\mu}{2\alpha \lambda(1-x)}$ in $  \dot{x} =0$ yields $x(1-x) (2x+\zeta(1-\tfrac{\mu}{2\alpha \lambda(1-x)})-1 -c)=0$, which for $x <1$ has solutions $x=0$, and $\scalemath{0.89}{\beta_{\pm} = \tfrac{1}{4} (c+3-\zeta \pm \sqrt{(c+3-\zeta)^2 + 8[\zeta(1-\tfrac{\mu}{2\alpha \lambda}) -1-c] })}$. The EEs are given by  $(0,1-\tfrac{\mu}{2\alpha \lambda})$ and $(\beta_{\pm},1-\tfrac{\mu}{2\alpha \lambda(1-\beta_{\pm})})$. First, we investigate for which values of $\zeta$ we have  $(c+3-\zeta)^2  + 8[\zeta\left(1-\tfrac{\mu}{2\alpha \lambda}\right) -1-c]= \zeta^2 +2 \zeta ( 1- c - \tfrac{2 \lambda}{\alpha \lambda}) + (c-1)^2 \ge 0$,
which is a necessary condition for $\beta_{\pm} \in \mathbb{R}$. Note that the roots of $\scalemath{1}{\zeta^2 +2 \zeta ( 1- c - \tfrac{2 \lambda}{\alpha \lambda}) + (c-1)^2}$ are $\zeta_{\pm} = c -1 + \tfrac{2 \mu}{\alpha \lambda} \pm \sqrt{\tfrac{4 \mu}{\alpha \lambda} (c-1 + \tfrac{ \mu}{\alpha \lambda})} $. Since $ \zeta_{-}< c-1$,  $\beta_{\pm} \in \mathbb{R}$ iff $\zeta \ge \zeta_{+} $. Next, we study for which values of $\zeta$ and $c$ we have $\beta_{\pm} \in (0,1)$, while assuming that the previously identified conditions necessary for $\beta_{\pm} \in \mathbb{R}$ hold. We start with $\beta_+$. If $\zeta <c+3$, then $\beta_+ >0$. If $\zeta \ge c+3$, then $\beta_+ >0$ iff $\zeta >\tfrac{2\alpha \lambda(1+c)}{2\alpha \lambda-\mu}$, for which $\zeta(1-\tfrac{\mu}{2\alpha \lambda}) -1-c > 0$.
Observe that $\beta_+ <1$ iff $\sqrt{(c+3-\zeta)^2 + 8[\zeta(1-\tfrac{\mu}{2\alpha \lambda}) -1-c] } < \zeta +1-c$, which is always satisfied. Hence, $\beta_{+} \in (0,1)$ if $ \zeta \ge c+3$ and $ \zeta> \tfrac{2\alpha \lambda(1+c)}{2\alpha \lambda-\mu}$, or if $\scalemath{1}{\zeta< c+3}$. Combining this with the conditions for $\beta_{+} \in \mathbb{R}$, yields the regions of $\zeta$ for which $\beta_{+} \in (0,1)$. Note here that if $ \zeta> \tfrac{2\alpha \lambda(1+c)}{2\alpha \lambda-\mu}$, then $\beta_+ \in \mathbb{R}$. The condition on $c$ in a) results from the fact that, for the region to exist, the lower bound must have a lower value than the upper bound. Likewise, we consider $\beta_-$. Observe that $\beta_- > 0$ iff $\zeta < \min \{c+3, \tfrac{2\alpha \lambda(c+1)}{2\alpha \lambda- \mu} \}$ and $\beta_- <1$ iff $\sqrt{(c+3-\zeta)^2 + 8[\zeta(1-\tfrac{\mu}{2\alpha \lambda}) -1-c] } > c-\zeta-1$, which is satisfied. Combining the above with the conditions ensuring that $\beta_{-} \in \mathbb{R}$, gives the regions of $\zeta$ for which $\beta_{-} \in (0,1)$.

Then, we study local stability. First, we consider the DFE $(0,0)$. Linearising \eqref{simplesystem} about $(0,0)$, we find that the eigenvalues of the Jacobian matrix are $ 2\alpha \lambda- \mu$ and $ -(c+1)<0 $, where the former is negative iff $2\alpha \lambda < \mu$. Thus, the equilibrium  is locally exponentially stable (LES) if $2\alpha \lambda < \mu$, and a saddle point if $2\alpha \lambda > \mu$. The case $2\alpha \lambda = \mu$ is studied separately, yielding asymptotic stability. Computations are omitted due to space constraints. 
Next, we consider the DFE $(1,0)$. In a similar way, we linearize the system about the equilibrium and we find the eigenvalues of the Jacobian matrix, which are given by  $ c-1> 0 $ and $- \mu<0$, implying that the DFE $(1,0)$ is a saddle point.
Now consider the EE $(0,1-\tfrac{\mu}{2\alpha \lambda})$, with  $\mu < 2\alpha \lambda$, so it exists. Linearising the system about this equilibrium, we obtain a Jacobian matrix with eigenvalues $ \zeta(1-\tfrac{\mu}{2\alpha \lambda} ) - 1 -c$ and $\mu - 2\alpha \lambda<0$, so the equilibrium is LES if $\zeta< \tfrac{2\alpha \lambda(1+c)}{2\alpha \lambda-\mu}$, and a saddle point if the opposite inequality holds.  If the equality holds, \eqref{simplesystem} is studied directly (computations omitted), yielding asymptotic stability. 
Finally, consider the interior EE with $x= \beta_{+}$. Let $\beta_+ \in (0,1)$ and $\mu < 2\alpha \lambda (1-\beta_+)$ for existence. 
Linearising \eqref{simplesystem} about the EE yields the Jacobian matrix
\begin{equation*}
A=\begin{bmatrix} 2 \beta_+(1-\beta_+)  & \zeta \beta_+ (1-\beta_+) \\ -\tfrac{\mu}{1-\beta_+}(1-\tfrac{\mu}{2\alpha \lambda(1-\beta_+)}  ) &\mu-2\alpha \lambda (1-\beta_+) \end{bmatrix}.
\end{equation*}
By the determinant-trace method, the EE with $x= \beta_{+}$ is LES if 
$\tfrac{4 \alpha \lambda}{\zeta} (1-\beta_+)^2 < \mu < 2(1-\beta_+)(\alpha \lambda- \beta_+)$ and unstable if at least one of the opposite inequalities holds. By replacing $\beta_+$ with $\beta_-$ in the argument above, we complete the proof by obtaining the conditions for the other interior EE.

\subsection{Proof of Proposition~\ref{prop2}}\label{app:proofprop2}

Item i) follows from Lemma \ref{propequi}. We now prove ii). Under the parameter constraints imposed by the hypothesis of the proposition, it follows from Lemma \ref{propequi} that \eqref{simplesystem} has three equilibria on the boundary: $(0,0)$, $(1,0)$ and $(0,1-\tfrac{\mu}{2\alpha \lambda })$, all of which are saddle points. Moreover, the interior EE with $x=\beta_{-}$ does not exist. For existence of the interior EE with $x=\beta_{+}$, we need $ \lambda> \tfrac{\mu}{2\alpha(1-\beta_{+})}$, which is equivalent to  $\scalemath{0.98}{\sqrt{(c+3-\zeta)^2 + 8[\zeta(1-\tfrac{\mu}{2\alpha \lambda}) -1-c] }< \zeta+1-c-\tfrac{2\mu}{\alpha \lambda}}$, where the right-hand side (RHS) is positive for $ \lambda> \tfrac{\mu}{2\alpha}$ and $\zeta> c+3$. Note that $c+3 \le \tfrac{2 \alpha \lambda(1+c)}{2\alpha \lambda-\mu}$ iff $c \ge \tfrac{4 \alpha \lambda}{\mu}-3$. Squaring both sides, algebraic simplifications yield the equivalent expression $\scalemath{1}{\tfrac{\mu}{\alpha \lambda} + c-1  > 0}$, which holds.

The interior EE is LES iff $\tfrac{4 \alpha \lambda}{\zeta} (1-\beta_+)^2 < \mu < 2(1-\beta_+)(\alpha \lambda- \beta_+)$ (Lemma \ref{propequi}). The condition $\mu < 2(1-\beta_+)(\alpha \lambda- \beta_+)$ is equivalent to 
$\scalemath{.85}{        \tfrac{1}{2}[\alpha \lambda +1  + \tfrac{1}{2} (\zeta-c-3)] \sqrt{(c+3-\zeta)^2 + 8\left[\zeta\left(1-\tfrac{\mu}{2\alpha \lambda}\right) -1-c\right] }}<$
$        \scalemath{.85}{2\alpha  \lambda - \mu + \tfrac{1}{4}(\zeta-c-3)^2+ \tfrac{1}{2}(\zeta-c-3)(\alpha \lambda +1) + [\zeta(1-\tfrac{\mu}{2\alpha \lambda})-1}$
        $\scalemath{.85}{-c].}
$
For  $ \lambda> \tfrac{\mu}{2\alpha}$ and $\zeta> \max \{c+3,\tfrac{2\alpha \lambda(1+c)}{2\alpha \lambda-\mu} \}$, both the left-hand side (LHS) and RHS of the equation above are positive. After squaring both sides and some straightforward rewriting, we obtain the equivalent condition $(2\alpha \lambda - \mu)\zeta^2 -2(\alpha\lambda)^2 [c-3 + \tfrac{1}{\alpha \lambda}(c+1+2 \mu) ]\zeta +2(\alpha\lambda)^2 (c^2 + 2\alpha \lambda(1-c) -2 \mu -1) <0$, where the roots of the polynomial are given by $\scalemath{0.93}{\bar{\zeta}_{\pm} = \tfrac{\alpha \lambda}{2 \alpha \lambda- \mu} ((c+1) [1 \pm \sqrt{(\alpha \lambda -1)^2 +2 \mu}] + \alpha \lambda(c-3) +2 \mu )}$. The leading term of the polynomial on the LHS is positive, so the condition is satisfied iff $\bar{\zeta}_{-} < \zeta < \bar{\zeta}_{+}$. 

Next, $\mu > \tfrac{4 \alpha \lambda}{\zeta} (1-\beta_+)^2$ is equivalent to
 $\scalemath{.85}{ (\zeta+1-c)  \sqrt{(c+3-\zeta)^2 + 8[\zeta(1-\tfrac{\mu}{2\alpha \lambda}) -1-c] }
       > \zeta^2  +(c-1)^2}$ $ \scalemath{.85}{+2\zeta(1-c-\tfrac{5\mu}{4\alpha \lambda}) },$
 where the roots of the RHS are  $\tilde{\zeta}_{\pm}=c - 1 + \tfrac{5 \mu}{4 \alpha \lambda} \pm \sqrt{\tfrac{5 \mu}{2\alpha \lambda}(c-1+ \tfrac{5 \mu}{8 \alpha \lambda})}$. For $c<\tfrac{32 \alpha \lambda}{5 \mu}-3$, we have $c+3> \tilde{\zeta}_{+} $, so the RHS is positive. Squaring both sides and rewriting yields the equivalent expression  $- \zeta^3 +[2(c-1) + \tfrac{25\mu}{4\alpha \lambda}] \zeta^2 - (c -1)^2 \zeta < 0$, where the roots of the LHS are $\zeta=0$ and
 $ \breve{\zeta}_{\pm} = c-1 + \tfrac{25\mu}{8\alpha \lambda}  \pm \frac{5}{2} \sqrt{\tfrac{\mu}{\alpha \lambda} (c-1+ \tfrac{25\mu}{16\alpha \lambda})}.$ Since $c>1$ implies $  \breve{\zeta}_{-}< c-1$, we need $\zeta >  \breve{\zeta}_{+}$ for stability.

\subsection{Proof of Theorem~\ref{theo1}}\label{app:prooftheo1}

Under the conditions of Theorem~\ref{theo1}, \eqref{simplesystem} has three saddle points on the boundary: $(0,0)$, $(1,0)$ and $(0,1-\tfrac{\mu}{2\alpha \lambda })$, and the unique fully unstable interior EE $(\beta_{+},1-\tfrac{\mu}{2\alpha \lambda(1-\beta_+)})$. 
Also, $\dot{y} < 0$ if $y\neq 0$ and $\tfrac{\mu}{2\alpha \lambda}>(1-x)(1-y)$. For all $y>1-\tfrac{\mu}{2\alpha \lambda}$, we have $\tfrac{\mu}{2\alpha \lambda}>1-y \ge (1-x)(1-y)$, so $\dot{y} < 0$. Furthermore, $\dot{y} = -\mu xy \le 0$ at $y=1- \tfrac{\mu}{2\alpha \lambda}$, which implies that the region $[0,1] \times [0,1-\tfrac{\mu}{2\alpha \lambda}]$ is attractive and invariant.

%We now study what happens if the system moves away from the boundary equilibria. In particular, 
We study now the behaviour of the system near the boundaries of $[0,1] \times [0,1-\tfrac{\mu}{2\alpha \lambda}]$, and examine whether it is possible to reach the boundary of the domain. Consider the boundary $x=1$. Let us assume that the trajectory reaches $x=1-\varepsilon$, with $\varepsilon>0$ arbitrarily small at time $t_0$. From \eqref{simplesystem}, we observe that $\dot x(t_0)=\varepsilon(1-\varepsilon)(1+\zeta y-c-2\varepsilon)$ when $x(t_0)=1-\varepsilon$. Hence, $\dot{x}$ can be positive only if $y > \tfrac{c-1}{ \zeta}$. We consider the regions $\mathcal R_\varepsilon:=[1-\varepsilon,1]\times[\tfrac{c-1}{\zeta},1-\tfrac{\mu}{2\alpha \lambda}]$ and $\scalemath{1}{\mathcal S_\varepsilon:=[1-\varepsilon,1]\times[0,\tfrac{c-1}{\zeta}]}$, where $\dot{x}$ can be positive only in $\mathcal R_\varepsilon$, while it is negative in $\mathcal S_\varepsilon$. 
Furthermore, we derive the following uniform bound for any pair $(x,y)\in\mathcal R_\varepsilon$: $\dot y=-\mu y + 2\alpha \lambda y\varepsilon(1-y)<-\mu\tfrac{c-1}{\zeta}<0$, for $\varepsilon$ sufficiently small. Similarly, we observe that $\dot{x} = x(1-x) (2x+\zeta y -1-c) \le k(1-x)$, for some constant $k>0$. 
These bounds yield a strict bound on the distance between the trajectory and $x=1$ before the trajectory exits the region $\mathcal R_\varepsilon$ from the bottom and enters $\mathcal S_\varepsilon$. 
We define $u(t)=1-x(t)$. The uniform bound on $\dot x$ in $\mathcal R_\varepsilon$ is equivalent to $-\dot{u} \le - k(-u)$, so by the Gronwall-Bellman  inequality~\cite{pachpatte1997inequalities}, $-u(t) \le -u(t_0)e^{-k(t-t_0)}$, which is equivalent to $x(t) \le 1-\varepsilon e^{-k (t-t_0)}$, for any $t\geq t_0$. Here, we used the fact that $x(t_0)=1-\varepsilon$. 
Then, the uniform bound on $\dot y$ in $\mathcal R_\varepsilon$ is used to derive a bound on the time needed for the trajectory to exit $\mathcal R_\varepsilon$. Specifically, since the length along the $y$-axis of $\mathcal R_\varepsilon$ is equal to $(1-\tfrac{\mu}{2\alpha \lambda}-\tfrac{c-1}{\zeta})$, and the time-derivative of the trajectory along the $y$-component is negative and greater in modulus than $\mu\tfrac{c-1}{\zeta}$, then there necessarily exists a time $\tilde{t} \leq \zeta(1-\tfrac{\mu}{2\alpha \lambda}-\tfrac{c-1}{\zeta})/(\mu (c-1))$ such that $y(t_0+\tilde t)<\tfrac{c-1}{\zeta}$ and $x(t_0+\tilde t) \le 1-\varepsilon'$, with $\varepsilon'=\varepsilon e^{-k\tilde t}$. 
Hence, the trajectory will exit from $\mathcal R_\varepsilon$ and will enter $\mathcal S_\varepsilon$, in which $\dot x<0$ and $\dot y<0$. This establishes that the trajectory cannot further approach the boundary $x=1$, nor re-enter  $\mathcal R_\varepsilon$ from the boundary between $\mathcal S_\varepsilon$ and $\mathcal R_\varepsilon$. Thus, there exists a constant $\varepsilon'>0$ such that $[0,1-\varepsilon']\times [0,1-\tfrac{\mu}{2\alpha \lambda}]$ is positively invariant for \eqref{simplesystem}.

A similar argument guarantees that any trajectory that starts from the interior is bounded away from the boundaries $y=0$ and $x=0$. Since convergence to the boundaries is impossible, the boundary equilibria points cannot be reached if the initial conditions of the system are in the interior of the domain $(0,1) \times (0,1)$.
Finally, we consider the open set $(x,y) \in (0,1) \times (0,1-\tfrac{\mu}{2\alpha \lambda})$. Since the unique interior EE is an unstable point under the above conditions, there does not exist a homoclinic orbit. It follows directly from the generalised Poincar\'{e}-Bendixson theorem~\cite{teschl2012ordinary} that every non-empty compact $\omega$-limit set of an orbit is periodic.

\end{document}